\documentclass[11pt]{article}
\usepackage{amsmath}
\usepackage{amssymb}
\usepackage{amsthm}
\usepackage[usenames]{color}
\usepackage{amscd}
\usepackage{indentfirst}

\usepackage[colorlinks=true,linkcolor=blue,filecolor=red,
citecolor=webgreen]{hyperref}
\definecolor{webgreen}{rgb}{0,.5,0}

\hoffset=-.7truein \voffset=-.6truein \textwidth=160mm
\textheight=210mm

\def\N{{\Bbb N}}

\def\C{{\Bbb C}}

\def\1{{\bf 1}}

\def\eps{{\varepsilon}}

\numberwithin{equation}{section}

\newtheorem{theorem}{Theorem}[section]

\newtheorem{remark}[theorem]{Remark}

\begin{document}

\title{{\bf On certain sums concerning the gcd's and lcm's of $k$ positive integers}}
\author{Titus Hilberdink, Florian Luca, and L\'aszl\'o T\'oth}
\date{}
\maketitle

\centerline{International Journal of Number Theory, Vol. 16, No. 1 (2020) 77--90}

\begin{abstract}
We use elementary arguments to prove results on the order of magnitude of certain sums concerning the gcd's and lcm's
of $k$ positive integers, where $k\ge 2$ is fixed. We refine and generalize an asymptotic formula of Bordell\`{e}s (2007), and extend certain
related results of Hilberdink and T\'oth (2016). We also formulate some conjectures and open problems.
\end{abstract}

{\sl 2010 Mathematics Subject Classification}: 11A25, 11N37

{\sl Key Words and Phrases}: greatest common divisor, least common multiple, gcd-sum function, lcm-sum function, asymptotic formula,
order of magnitude

\section{Introduction}

Consider the gcd-sum function
\begin{equation*}
G(n):=\sum_{k=1}^n (k,n)=\sum_{d\mid n} d\varphi(n/d)  \qquad (n\in \N),
\end{equation*}
where $\varphi(n)$ is Euler's totient function. The function $G(n)$ is multiplicative and the asymptotic formula
\begin{equation} \label{G_asymp}
\sum_{n\le x} G(n)=\frac{x^2}{2\zeta(2)} \left(\log x +2\gamma-\frac1{2}-\frac{\zeta'(2)}{\zeta(2)}\right) + {\cal
O}(x^{1+\theta+\varepsilon}),
\end{equation}
holds for every $\varepsilon>0$, where $\gamma$ is Euler's constant, and $\theta$ is the exponent appearing in
Dirichlet's divisor problem. See the survey paper \cite{Tot2010} by the third author.

The function
\begin{equation*}
G^{(-1)}(n):=\sum_{k=1}^n \frac1{(k,n)} =\sum_{d\mid n} \frac{\varphi(n/d)}{d}   \qquad (n\in \N),
\end{equation*}
is also multiplicative. Bordell\`{e}s \cite[Th.\ 5.1]{Bor2007} deduced that
\begin{equation} \label{G_minus_1}
\sum_{n\le x} G^{(-1)}(n) = \frac{\zeta(3)}{2\zeta(2)} x^2 + O\left(x(\log x)^{2/3}(\log \log x)^{4/3}\right).
\end{equation}

The error term of estimate \eqref{G_minus_1} comes from the classical result of Walfisz \cite[Satz 1, p.\ 144]{Wal1963},
\begin{equation} \label{Walfisz}
R(x):= \sum_{n\le x} \varphi(n) - \frac1{2\zeta(2)} x^2 = O\left(x(\log x)^{2/3}(\log \log x)^{4/3}\right).
\end{equation}

We remark that recently \eqref{Walfisz} was improved by Liu \cite{Liu2016} into
\begin{equation} \label{Liu}
R(x)= O\left(x(\log x)^{2/3}(\log \log x)^{1/3}\right),
\end{equation}
therefore, this serves as the remainder of \eqref{G_minus_1}. Also see the preprint by Suzuki \cite{Suz2018}.

The lcm-sum function
\begin{equation*}
L(n): =\sum_{k=1}^n [k,n] = \frac{n}{2} \left(1+\sum_{d\mid n}
d\varphi(d)\right) \qquad (n\in \N).
\end{equation*}
was investigated by Bordell\`{e}s \cite{Bor2007}, Ikeda and Matsuoka \cite{IkeMat2014}, and others.
The function $L(n)$ is not multiplicative and one has, see \cite[Th.\ 6.3]{Bor2007},
\begin{equation} \label{asympt_L}
\sum_{n\le x} L(n) = \frac{\zeta(3)}{8\zeta(2)} x^4 + O\left(x^3 (\log x)^{2/3}(\log \log x)^{4/3} \right).
\end{equation}

By using \eqref{Liu}, the exponent of the $\log \log x$ factor in the error of \eqref{asympt_L} can be improved into $1/3$.

Now let
\begin{equation*}
L^{(-1)}(n): =\sum_{k=1}^n \frac1{[k,n]} \qquad (n\in \N).
\end{equation*}

Bordell\`{e}s \cite[Th.\, 7.1]{Bor2007} proved that
\begin{equation} \label{lcm_recipr_two_var}
\sum_{n\le x} L^{(-1)}(n) = \frac1{\pi^2} (\log x)^3+ A (\log x)^2 + O(\log x),
\end{equation}
with an explicitly given constant $A$.

By the general identity
\begin{equation*} \label{f_symm}
\sum_{m,n\le x} \psi(m,n) = 2 \sum_{n\le x} \sum_{m=1}^n \psi(m,n) -\sum_{n\le x} \psi(n,n),
\end{equation*}
valid for any function $\psi:\N^2 \to \C$ , which is symmetric in the variables, \eqref{G_asymp}, \eqref{G_minus_1}, \eqref{asympt_L}
and \eqref{lcm_recipr_two_var}, together with the remark on \eqref{Liu} lead to the asymptotic formulas
\begin{equation} \label{gcd_m_n}
\sum_{m,n\le x} (m,n)= \frac{x^2}{\zeta(2)}\left(\log x+ 2\gamma
-\frac1{2}-\frac{\zeta(2)}{2}- \frac{\zeta'(2)}{\zeta(2)} \right) +
O\left(x^{1+\theta+\varepsilon}\right),
\end{equation}
\begin{equation} \label{gcd_recipr_m_n}
\sum_{m,n\le x} \frac1{(m,n)} = \frac{\zeta(3)}{\zeta(2)} x^2 + O\left(x(\log x)^{2/3}(\log \log x)^{1/3} \right),
\end{equation}
\begin{equation} \label{lcm_m_n}
\sum_{m,n\le x} [m,n] = \frac{\zeta(3)}{4\zeta(2)} x^4 + O\left(x^3(\log x)^{2/3}(\log \log x)^{1/3} \right),
\end{equation}
and
\begin{equation} \label{lcm_recipr_m_n}
\sum_{m,n\le x} \frac1{[m,n]} = \frac{2}{\pi^2} (\log x)^3+ A_1 (\log x)^2 + O(\log x),
\end{equation}
respectively, where $A_1=2A$.

It is easy to generalize \eqref{gcd_m_n} and \eqref{gcd_recipr_m_n} for sums with $k$ variables by using the general identity
\begin{equation*}
\sum_{n_1,\ldots,n_k\le x} f((n_1,\ldots,n_k)) = \sum_{d\le x}(\mu*f)(d)  \lfloor x/d \rfloor^k,
\end{equation*}
where $f$ is an arbitrary arithmetic function, $\mu$ is the M\"obius function and $*$ stands for the Dirichlet convolution of arithmetic
functions. For example, we have the next result: For any $k\ge 3$,
\begin{equation*}
\sum_{n_1,\ldots, n_k\le x} \frac1{(n_1,\ldots, n_k)} = \frac{\zeta(k+1)}{\zeta(k)} x^k + O\left(x^{k-1}\right).
\end{equation*}

However, it is more difficult to derive asymptotic formulas for similar sums involving the lcm $[n_1,\ldots,n_k]$.
As corollaries of more general results concerning a large class of functions $f$, the first
and third authors \cite[Cor\, 1]{HilTot2016} proved that for any $k\ge 3$ and any real number $r>-1$,
\begin{equation} \label{lcm_k_dim}
\sum_{n_1,\ldots, n_k\le x} [n_1,\ldots, n_k]^r = A_{r,k} x^{k(r+1)} + O\left(x^{k(r+1)-\frac1{2}\min (r+1,1)+\varepsilon}\right)
\end{equation}
and
\begin{equation*}
\sum_{n_1,\ldots, n_k\le x} \left(\frac{[n_1,\ldots, n_k]}{n_1\cdots n_k}\right)^r = A_{r,k} x^k + O\left(x^{k-\frac1{2}\min (r+1,1)+\varepsilon}\right),
\end{equation*}
where $A_{k,r}$ are explicitly given constants. Here, \eqref{lcm_k_dim} is the $k$ dimensional generalization of \eqref{lcm_m_n}. Furthermore,
\cite[Cor\, 2]{HilTot2016} shows that for any $k\ge 3$ and any real number $r>0$,
\begin{equation*}
\sum_{n_1,\ldots, n_k\le x} \left(\frac{[n_1,\ldots, n_k]}{(n_1,\ldots,n_k)}\right)^r = B_{r,k} x^{k(r+1)} + O\left(x^{k(r+1)-\frac1{2}+\varepsilon}\right),
\end{equation*}
with explicitly given constants $B_{k,r}$. The proofs use the fact that $(n_1,\ldots,n_k)$ and $[n_1,\ldots,n_k]$ are multiplicative functions of $k$ variables
and the associated  multiple Dirichlet series factor over the primes into Euler products. The proofs given in \cite{HilTot2016} cannot be applied in
the case $r=-1$.

It is the goal of the present paper to investigate the order of magnitude of the sums
\begin{equation} \label{S_k_x}
S_k(x):= \sum_{n_1,\ldots,n_k\le x} \frac1{[n_1,\ldots,n_k]},
\end{equation}
\begin{equation} \label{T_k_x}
T_k(x):= \sum_{n_1,\ldots, n_k\le x} \frac{(n_1,\ldots, n_k)}{[n_1,\ldots,n_k]},
\end{equation}
\begin{equation} \label{U_k_x}
U_k(x):= \sum_{\substack{n_1,\ldots, n_k\le x\\ (n_1,\ldots,n_k)=1}} \frac1{[n_1,\ldots,n_k]},
\end{equation}
\begin{equation} \label{V_k_x}
V_k(x):= \sum_{n_1,\ldots, n_k\le x} \frac{n_1\cdots n_k}{[n_1,\ldots,n_k]},
\end{equation}
where $k\ge 2$ is fixed, by using elementary arguments. Theorem \ref{Th_L_m1_k_2}, concerning the sum $S_2(x)$,
refines formulas \eqref{lcm_recipr_two_var} and \eqref{lcm_recipr_m_n} of Bordell\`{e}s \cite{Bor2007}.
Theorems \ref{Th_sum_S_k} and \ref{Th_sum_U_k} give the exact order of magnitude of the sums
$S_k(x)$ and $U_k(x)$, respectively, for $k\ge 3$. Theorem \ref{Th_sum_V_k} concerns the sums $V_k(x)$, while Theorem \ref{Th_sum_T_k}
provides an asymptotic formula with remainder term for $T_k(x)$, for any fixed $k\ge 2$. Some conjectures and open problems are formulated as well.

\section{The sums $S_k(x)$}

First consider the sums $S_k(x)$ defined by \eqref{S_k_x}. In the case $k=2$ we use Dirichlet's hyperbola method to prove the next
result, which improves formulas \eqref{lcm_recipr_two_var} and \eqref{lcm_recipr_m_n}.

\begin{theorem}\label{Th_L_m1_k_2}
\begin{equation} \label{L_minus_1}
\sum_{n\le x} L^{(-1)}(n) = \frac1{\pi^2} (\log x)^3+ A (\log x)^2 + B \log x + C +O\left(x^{-1/2} (\log x)^2\right),
\end{equation}
that is,
\begin{equation*}
\sum_{m,n\le x} \frac1{[m,n]}  = \frac{2}{\pi^2} (\log x)^3+ A_1 (\log x)^2 + B_1 \log x + C_1 +O\left(x^{-1/2} (\log x)^2\right),
\end{equation*}
where the constants $A,B,C$ can be explicitly computed, and $A_1=2A$, $B_1=2B-1$, $C_1=C -\gamma$.
\end{theorem}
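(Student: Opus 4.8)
The plan is to rewrite $\sum_{n\le x}L^{(-1)}(n)$ as a Dirichlet-convolution sum and then apply Dirichlet's hyperbola method with the balanced splitting point $\sqrt x$, the only delicate point being the control of the error term. First, since $[k,n]=kn/(k,n)$ and $(k,n)=\sum_{d\mid(k,n)}\varphi(d)$,
\[
L^{(-1)}(n)=\frac1n\sum_{k=1}^{n}\frac{(k,n)}{k}=\frac1n\sum_{d\mid n}\varphi(d)\sum_{\substack{k\le n\\ d\mid k}}\frac1k=\frac1n\sum_{d\mid n}\frac{\varphi(d)}{d}\,H(n/d),
\]
where $H(m):=\sum_{j\le m}1/j$; writing $n=de$ this becomes $L^{(-1)}(n)=\sum_{de=n}\tfrac{\varphi(d)}{d^2}\cdot\tfrac{H(e)}{e}=(f*g)(n)$ with $f(d)=\varphi(d)/d^2$ and $g(e)=H(e)/e$, so that $\sum_{n\le x}L^{(-1)}(n)=\sum_{de\le x}f(d)g(e)$. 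The second assertion of the theorem will then follow from the symmetry identity $\sum_{m,n\le x}\tfrac1{[m,n]}=2\sum_{n\le x}L^{(-1)}(n)-\sum_{n\le x}\tfrac1n$ together with $\sum_{n\le x}1/n=\log x+\gamma+O(1/x)$, which also fixes the relations between $A_1,B_1,C_1$ and $A,B,C$.

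Next I would record the auxiliary estimates. Besides $H(t)=\log t+\gamma+O(1/t)$ for $t\ge1$ and the standard expansion $\sum_{d\le t}(\log d)^r/d=\tfrac{(\log t)^{r+1}}{r+1}+\gamma_r+O\big((\log t)^r/t\big)$ (generalized Stieltjes constants, $\gamma_0=\gamma$), the key summatory functions are, using $\varphi(d)/d^2=\tfrac1d\sum_{e\mid d}\mu(e)/e$ hence $F(t):=\sum_{d\le t}f(d)=\sum_{e\le t}\tfrac{\mu(e)}{e^2}H(t/e)$,
\[
F(t)=\frac{\log t+\gamma}{\zeta(2)}-\frac{\zeta'(2)}{\zeta(2)^2}+O\!\left(\frac{\log t}{t}\right)=:M_F(\log t)+E_F(t),
\]
and, inserting the expansion for $H$ into $G(t):=\sum_{e\le t}g(e)$,
\[
G(t)=\tfrac12(\log t)^2+\gamma\log t+c_G+O\!\left(\frac{\log t}{t}\right)=:M_G(\log t)+E_G(t),
\]
with $c_G=\gamma_1+\gamma^2+(\text{an explicit constant})$. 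Note that $M_F$ has degree $1$ and $M_G$ degree $2$, that $E_F(t),E_G(t)=O(\log t/t)$, and that $\int_1^\infty E_F(t)(\log t)^r t^{-1}\,dt$ converges for $r=0,1$ while $\int_T^\infty E_F(t)(\log t)^r t^{-1}\,dt=O\big((\log T)^{r+1}/T\big)$, and similarly for $E_G$.

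Now I would apply the hyperbola method: with $U=V=\sqrt x$,
\[
\sum_{de\le x}f(d)g(e)=\sum_{d\le\sqrt x}f(d)G(x/d)+\sum_{e\le\sqrt x}g(e)F(x/e)-F(\sqrt x)G(\sqrt x).
\]
In the first sum, replacing $G(x/d)$ by $M_G(\log(x/d))+E_G(x/d)$ costs only $O\big(x^{-1}\sum_{d\le\sqrt x}\log(x/d)\big)=O(x^{-1/2}\log x)$ because $f(d)d=\varphi(d)/d\le1$; for the main part, Abel summation gives $\sum_{d\le\sqrt x}f(d)M_G(\log(x/d))=F(\sqrt x)M_G(\log\sqrt x)+\int_1^{\sqrt x}\tfrac{F(t)}{t}M_G'(\log(x/t))\,dt$. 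The crucial observation is that the boundary term $F(\sqrt x)M_G(\log\sqrt x)$ cancels \emph{exactly} the $M_G(\log\sqrt x)$–part of $-F(\sqrt x)G(\sqrt x)$ (since $x/\sqrt x=\sqrt x$); without this cancellation the term $E_F(\sqrt x)\,M_G(\log\sqrt x)=O\big(\tfrac{\log x}{\sqrt x}\big)\cdot O\big((\log x)^2\big)$ would produce an error of size only $O\big(x^{-1/2}(\log x)^3\big)$. What remains is $\int_1^{\sqrt x}\tfrac{F(t)}{t}M_G'(\log(x/t))\,dt-F(\sqrt x)E_G(\sqrt x)+O(x^{-1/2}\log x)$, where $F(\sqrt x)E_G(\sqrt x)=O(x^{-1/2}(\log x)^2)$; substituting $F(t)=M_F(\log t)+E_F(t)$ into the integral, the $M_F$–part integrates to an explicit polynomial in $\log x$, and the $E_F$–part equals $(\log x+\gamma)\int_1^{\sqrt x}\tfrac{E_F(t)}{t}\,dt-\int_1^{\sqrt x}\tfrac{E_F(t)\log t}{t}\,dt$, which by the last facts of Step 2 is a linear polynomial in $\log x$ (with explicit constants) plus $O\big(x^{-1/2}(\log x)^2\big)$ from the tails. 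The sum $\sum_{e\le\sqrt x}g(e)F(x/e)$ is handled the same way but is easier: since $M_F$ has degree $1$, all analogous boundary terms are already $O\big(x^{-1/2}(\log x)^2\big)$ and no cancellation is needed. Collecting the polynomial pieces from the three parts yields $\sum_{n\le x}L^{(-1)}(n)=\tfrac1{\pi^2}(\log x)^3+A(\log x)^2+B\log x+C+O\big(x^{-1/2}(\log x)^2\big)$; a short check of the cubic coefficient gives $\tfrac1{6\zeta(2)}=\tfrac1{\pi^2}$.

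The main obstacle is precisely this error bookkeeping in the hyperbola step: the naive estimate gives an error $O\big(x^{-1/2}(\log x)^3\big)$ arising from $E_F(\sqrt x)$ multiplied by the degree-$2$ polynomial $M_G$ evaluated near $\log\sqrt x$, and one must recognise that it is killed by the cross term $F(\sqrt x)G(\sqrt x)$ once the Abel-summation boundary term is paired with it — this asymmetry (needed on the $G$–side, not the $F$–side) reflects that $M_G$ has degree $2$ but $M_F$ only degree $1$. Everything else — the manipulation of the harmonic-type sums, the convergent auxiliary integrals, and the explicit evaluation of $A,B,C$ (and hence $A_1,B_1,C_1$) in terms of $\zeta(2),\zeta'(2),\zeta''(2),\gamma,\gamma_1$ — is routine.
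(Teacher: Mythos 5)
Your proposal is correct and achieves the stated error term, but it follows a genuinely different decomposition from the paper's. The paper writes $nL^{(-1)}(n)=(\1*h)(n)$ with $h(n)=\sum_{m\le n,\,(m,n)=1}1/m$, proves $\sum_{n\le x}h(n)=\frac{6}{\pi^2}(x\log x+cx)+O((\log x)^2)$, runs the hyperbola method on the convolution $\1*h$ (where the two summatory functions are of size $x$ and $x\log x$, so the bookkeeping is painless and yields $x\bigl(\frac{3}{\pi^2}(\log x)^2+a\log x+b\bigr)+O(\sqrt{x}(\log x)^2)$), and only at the very end divides by $n$ via partial summation to obtain \eqref{L_minus_1}. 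You instead factor $L^{(-1)}=f*g$ directly, with $f(d)=\varphi(d)/d^2$ and $g(e)=H_e/e$, and apply the hyperbola method to the $1/n$-weighted sum itself; there the summatory functions $F,G$ are logarithmic with errors $O(\log t/t)$, and, as you correctly identify, the naive expansion of the cross term $F(\sqrt{x})G(\sqrt{x})$ would lose a logarithm (giving only $O(x^{-1/2}(\log x)^3)$), which you repair by pairing the Abel boundary term $F(\sqrt{x})M_G(\log\sqrt{x})$ with the cross term so that only $F(\sqrt{x})E_G(\sqrt{x})=O(x^{-1/2}(\log x)^2)$ survives. Both routes are elementary and produce the same cubic polynomial with leading coefficient $1/(6\zeta(2))=1/\pi^2$; the paper's choice of working with $nL^{(-1)}(n)$ buys simple error control (no cancellation needed, errors are absolute rather than multiplied by polynomials in $\log x$) at the cost of a final partial summation, while your direct route avoids that last step but requires the more delicate boundary-term analysis, which you carry out correctly. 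One small remark: your symmetry step gives $A_1=2A$, $B_1=2B-1$ and $C_1=2C-\gamma$; the value $C_1=C-\gamma$ printed in the theorem appears to be a slip, since doubling \eqref{L_minus_1} and subtracting $\sum_{n\le x}1/n=\log x+\gamma+O(1/x)$ doubles the constant term.
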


\begin{proof} We have
\begin{equation} \label{n_L}
L^{(-1)}(n) =\sum_{k=1}^n \frac{(k,n)}{kn} = \frac{1}{n} \sum_{d\mid n} d \sum_{\substack{k=1\\ (k,n)=d}}^n \frac1{k}
= \frac{1}{n} \sum_{d\mid n} \sum_{\substack{t=1\\ (t,n/d)=1}}^{n/d} \frac1{t}
= \frac{1}{n}\sum_{d\mid n} h(d),
\end{equation}
where
\begin{equation*}
h(n):=\sum_{\substack{m=1\\ (m,n)=1}}^n \frac1{m} =  \sum_{m=1}^n \frac1{m} \sum_{d\mid (m,n)} \mu(d) = \sum_{d\mid n} \frac{\mu(d)}{d}
\sum_{j=1}^{n/d} \frac1{j}
\end{equation*}
\begin{equation*}
= \sum_{d\mid n} \frac{\mu(d)}{d} \left(\log \frac{n}{d}+\gamma+O\left(\frac{d}{n}\right) \right)
= \sum_{d\mid n} \frac{\mu(d)}{d} \log \frac{n}{d} + \gamma \frac{\varphi(n)}{n} + O\left(\frac{2^{\omega(n)}}{n}\right).
\end{equation*}

Hence,
\begin{equation*}
H(x):= \sum_{n\le x} h(n) =  \sum_{d\le x} \frac{\mu(d)}{d} \sum_{m\le x/d} \log m + \gamma \sum_{n\le x} \frac{\varphi(n)}{n} +
O\left(\sum_{n\le x} \frac{2^{\omega(n)}}{n}\right).
\end{equation*}

By using the known estimates
\begin{gather*}
\sum_{n\le x} \log n = x\log x -x + O(\log x), \\
\sum_{n\le x} \frac{\varphi(n)}{n}=\frac{6}{\pi^2}x +O(\log x),\\
\sum_{n\le x} \frac{2^{\omega(n)}}{n} = O((\log x)^2),
\end{gather*}
we deduce that
\begin{equation*}
H(x)= (x\log x - x) \sum_{d\le x} \frac{\mu(d)}{d^2} - x \sum_{d\le x} \frac{\mu(d)\log d}{d^2} +\frac{6}{\pi^2}\gamma x
+O((\log x)^2)
\end{equation*}
\begin{equation} \label{H_x}
= \frac{6}{\pi^2} (x\log x +cx) +O((\log x)^2),
\end{equation}
with a certain constant $c$. Let $\1(n)=1$ ($n\in \N$), and let $*$ denote the Dirichlet convolution. By Dirichlet's hyperbola method,
\begin{equation*}
\sum_{n\le x} (\1*h)(n)  =  \sum_{n\le\sqrt{x}} \left(H(x/n) + h(n) \lfloor x/n \rfloor \right) -\lfloor \sqrt{x} \rfloor H(\sqrt{x})
\end{equation*}
\begin{equation*}
= \sum_{n\le\sqrt{x}} H(x/n) + x\sum_{n\le\sqrt{x}}\frac{h(n)}{n} - \sqrt{x} H(\sqrt{x})+O(H(\sqrt{x})).
\end{equation*}

By partial summation,
\begin{equation*}
x\sum_{n\le\sqrt{x}}\frac{h(n)}{n} = \sqrt{x} H(\sqrt{x}) + x\int_1^{\sqrt{x}} \frac{H(t)}{t^2}\, dt,
\end{equation*}
and  using \eqref{H_x} we deduce
\begin{gather*}
\sum_{n\le x} (\1*h)(n) =  \frac{6}{\pi^2} \sum_{n\le\sqrt{x}} \left( \frac{x}{n}\log \left(\frac{x}{n}\right)+c \left(\frac{x}{n}\right)\right)
+ \frac{6x}{\pi^2} \int_1^{\sqrt{x}} \left( \frac{\log t}{t} +c \right)\frac{dt}{t}  + O\left(\sqrt{x}(\log x)^2\right)  \\
= x\left(\frac{3}{\pi^2}(\log x)^2 + a\log x + b\right) +O(\sqrt{x}(\log x)^2),
\end{gather*}
for some constants $a,b$, which can be explicitly calculated.

Here $(\1*h)(n)=n L^{(-1)}(n)$, according to \eqref{n_L}, and  we obtain \eqref{L_minus_1} by partial summation. \end{proof}

It is more difficult to handle the sums $S_k(x)$ in the case $k\ge 3$.  We will apply the following general result proved by the second
and third authors \cite{LucTot2017}, using elementary arguments.

\begin{theorem} \textup{(\cite{LucTot2017})} \label{Th_Luca_Toth}
Let $k$ be a positive integer and let $f:\N \to \C$ be a multiplicative function satisfying the following properties:

(i) $f(p)= k$ for every prime $p$,

(ii) $f(p^{\nu})=\nu^{O(1)}$ for every prime $p$ and every integer $\nu \ge 2$, where the constant implied by the $O$ symbol is uniform in $p$.

Then
\begin{equation*}
\sum_{n\le x} \frac{f(n)}{n}= \frac1{k!} C_{f} (\log x)^k + D_{f} (\log x)^{k-1} + O \left((\log x)^{k-2}\right),
\end{equation*}
where $C_f$ and $D_f$ are constants,
$$
C_{f}= \prod_p \left(1-\frac{1}{p}\right)^k \left(\sum_{\nu =0}^{\infty} \frac{f(p^{\nu})}{p^{\nu}} \right).
$$
\end{theorem}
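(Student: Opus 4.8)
\medskip

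\noindent\emph{Proof strategy.} The plan is to peel off a factor of the $k$-fold divisor function and reduce to a classical elementary estimate. Let $u(n)=1/n$, a completely multiplicative function, and let $u^{*k}$ denote the $k$-fold Dirichlet convolution $u*\cdots*u$, so that $u^{*k}(n)=\tau_k(n)/n$, where $\tau_k$ is the $k$-fold divisor function; its local generating series at a prime $p$ is $(1-z/p)^{-k}$. Since $f$ is multiplicative, so is $g(n):=f(n)/n$, with local series $\sum_{\nu\ge0}f(p^\nu)p^{-\nu}z^\nu$. I would define the multiplicative function $h$ by $g=u^{*k}*h$, i.e.\ by the local identity
\[
\sum_{\nu\ge0}h(p^\nu)z^\nu=\Bigl(1-\tfrac{z}{p}\Bigr)^{k}\sum_{\nu\ge0}\frac{f(p^\nu)}{p^\nu}z^\nu .
\]
The first step is to read off the properties of $h$: hypothesis (i) makes the coefficient of $z$ on the right-hand side cancel, so $h(p)=0$; hypothesis (ii) makes the coefficient of $z^\nu$ equal to $c_{p,\nu}/p^\nu$ with $c_{p,\nu}=\nu^{O(1)}$ uniformly in $p$, for $\nu\ge2$. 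Evaluating the Euler product of $h$ at $z=1$ then gives $\sum_{n\ge1}h(n)=\prod_p(1-1/p)^k\sum_{\nu\ge0}f(p^\nu)/p^\nu=C_f$, and — crucially for what follows — $\sum_{n\ge1}|h(n)|(\log n)^j n^{\alpha}<\infty$ for every $j\ge0$ and every $\alpha<1/2$, the $\nu=2$ factors of the product being exactly what restricts the range of $\alpha$.

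\medskip

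\noindent\emph{Assembly.} Next I would invoke the elementary $k$-dimensional divisor estimate (Dirichlet's hyperbola method, iterated), $\sum_{m\le y}\tau_k(m)=yP_{k-1}(\log y)+O(y^{1-\beta_k})$ with $P_{k-1}$ of degree $k-1$, leading coefficient $1/(k-1)!$, and some $\beta_k\in(0,1/2]$. Partial summation recasts this as
\[
D_k(y):=\sum_{m\le y}\frac{\tau_k(m)}{m}=Q_k(\log y)+\gamma_k+O\!\left(y^{-\beta_k}(\log y)^{k-1}\right),
\]
with $Q_k$ of degree $k$, leading coefficient $1/k!$, and $\gamma_k$ a constant. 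Since $g=u^{*k}*h$,
\[
\sum_{n\le x}\frac{f(n)}{n}=\sum_{d\le x}h(d)\,D_k(x/d)=\sum_{d\le\sqrt x}h(d)\,D_k(x/d)+O\!\Bigl((\log x)^k\sum_{d>\sqrt x}|h(d)|\Bigr),
\]
and the tail is $o(1)$ by the decay of $h$. In the remaining sum I substitute the expansion of $D_k$; its error term contributes $\ll x^{-\beta_k}(\log x)^{k-1}\sum_{d\le\sqrt x}|h(d)|\,d^{\beta_k}=O(1)$, again using $\beta_k<1/2$. Expanding $Q_k(\log x-\log d)$ in powers of $\log x$ and $\log d$, then replacing each truncated sum $\sum_{d\le\sqrt x}h(d)(\log d)^i$ by the convergent complete sum $\sum_{d\ge1}h(d)(\log d)^i$ at a cost that is $o(1)$, I collect: the coefficient of $(\log x)^k$ is $\tfrac1{k!}\sum_{d\ge1}h(d)=\tfrac1{k!}C_f$; the coefficient of $(\log x)^{k-1}$ is an explicit constant $D_f$ built from $C_f$, $\sum_{d\ge1}h(d)\log d$ and the coefficients of $Q_k$; and every lower power of $\log x$, together with $\gamma_k\sum_{d\le\sqrt x}h(d)=O(1)$, is absorbed into $O((\log x)^{k-2})$.

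\medskip

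\noindent\emph{Main obstacle.} The one genuinely delicate point is the quantitative control of $h$: showing that hypotheses (i) and (ii) are precisely what forces $\sum_n|h(n)|n^{\alpha}$ to converge for some $\alpha>0$ (indeed for every $\alpha<1/2$), which is what renders every tail negligible and exhibits $C_f$ as a convergent Euler product. The only place this must be reconciled with the divisor estimate is $k=2$, where the hyperbola exponent $\beta_2=1/2$ sits at the boundary; this is dispatched either by using the sharper bound $\sum_{m\le y}\tau_2(m)=y\log y+(2\gamma-1)y+O(y^{1/3})$, or simply by noting that $\sum_{d\le\sqrt x}|h(d)|d^{1/2}=(\log x)^{O(1)}$, so the factor $x^{-1/2}$ still dominates. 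The rest of the argument is routine bookkeeping with polynomials in $\log x$.
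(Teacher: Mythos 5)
This theorem is not proved in the present paper at all: it is imported verbatim from the cited source \cite{LucTot2017}, whose elementary argument is precisely the one you give, namely writing $f(n)/n$ as the Dirichlet convolution of $\tau_k(n)/n$ with a multiplicative correction $h$ satisfying $h(p)=0$ and $h(p^{\nu})\ll \nu^{O(1)}p^{-\nu}$, so that $\sum_n |h(n)|n^{\alpha}$ converges for $\alpha<1/2$, and then feeding in the elementary asymptotic for $\sum_{m\le y}\tau_k(m)/m$. Your write-up is correct as it stands for $k\ge 2$ (the only case used in this paper); for $k=1$ the constant $\gamma_1\sum_d h(d)$ cannot be absorbed into $O((\log x)^{k-2})=O(1/\log x)$ and must instead be folded into $D_f$, a trivial adjustment your method already accommodates.
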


We have the following result.

\begin{theorem} \label{Th_sum_S_k} Let $k\ge 3$ be a fixed integer. Then
\begin{equation*}
S_k(x) \asymp (\log x)^{2^k-1} \qquad \text{ as $x\to \infty$.}
\end{equation*}
\end{theorem}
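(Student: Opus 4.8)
The plan is to transport the $k$-fold sum to a single-variable sum over the least common multiple and then apply Theorem \ref{Th_Luca_Toth}. Sorting the tuples $(n_1,\dots,n_k)$ according to the value $m=[n_1,\dots,n_k]$ gives
$$S_k(x)=\sum_{m\ge1}\frac{N_k(m;x)}{m},\qquad N_k(m;x):=\#\bigl\{(n_1,\dots,n_k)\in\N^k:\ n_i\le x\ (1\le i\le k),\ [n_1,\dots,n_k]=m\bigr\}.$$
Dropping the constraints $n_i\le x$ leaves the function
$$J_k(m):=\#\bigl\{(n_1,\dots,n_k):\ n_i\mid m,\ [n_1,\dots,n_k]=m\bigr\},$$
which is multiplicative: writing $e_i=m/n_i$, a tuple with $[n_1,\dots,n_k]=m$ corresponds bijectively to a $k$-tuple of divisors $e_i\mid m$ with $(e_1,\dots,e_k)=1$, and comparing $p$-adic valuations gives $J_k(p^a)=(a+1)^k-a^k$. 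In particular $J_k(p)=2^k-1$ for every prime $p$, while $J_k(p^\nu)=(\nu+1)^k-\nu^k\ll_k \nu^{k-1}$ uniformly in $p$ (for $\nu\ge2$). Hence $J_k$ satisfies hypotheses (i)--(ii) of Theorem \ref{Th_Luca_Toth} with $k$ there replaced by the positive integer $2^k-1$, so that
$$\sum_{m\le x}\frac{J_k(m)}{m}=\frac{C_{J_k}}{(2^k-1)!}\,(\log x)^{2^k-1}+O\!\bigl((\log x)^{2^k-2}\bigr),$$
where $C_{J_k}=\prod_p(1-1/p)^{2^k-1}\sum_{\nu\ge0}J_k(p^\nu)p^{-\nu}$ is a convergent product of positive factors, each equal to $1+O(p^{-2})$; in particular $C_{J_k}>0$.

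For the lower bound, note that if $m\le x$ then every divisor of $m$ is $\le x$, so the conditions $n_i\le x$ are vacuous and $N_k(m;x)=J_k(m)$. Keeping only these (nonnegative) terms,
$$S_k(x)\ \ge\ \sum_{m\le x}\frac{N_k(m;x)}{m}\ =\ \sum_{m\le x}\frac{J_k(m)}{m}\ \gg\ (\log x)^{2^k-1}.$$

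For the upper bound I would use two facts: trivially $N_k(m;x)\le J_k(m)$, and $N_k(m;x)=0$ unless $m$ is the least common multiple of integers $\le x$, which forces $p^{v_p(m)}\le x$ for every prime $p\mid m$ (some $n_j$ is divisible by $p^{v_p(m)}$, so $p^{v_p(m)}\le n_j\le x$). Let $\mathcal M_x$ be the set of $m$ with this property; membership depends on each prime-power component separately, so by multiplicativity of $J_k$,
$$S_k(x)\ \le\ \sum_{m\in\mathcal M_x}\frac{J_k(m)}{m}\ =\ \prod_p\ \sum_{\substack{a\ge0\\ p^a\le x}}\frac{(a+1)^k-a^k}{p^a}.$$
For $p>x$ only the term $a=0$ remains, contributing a factor $1$; for $p\le x$ the $p$-th factor equals $1+\frac{2^k-1}{p}+\sum_{a\ge2,\,p^a\le x}\frac{(a+1)^k-a^k}{p^a}\le\bigl(1+\tfrac1p\bigr)^{2^k-1}\bigl(1+O_k(p^{-2})\bigr)$. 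Therefore
$$S_k(x)\ \ll_k\ \Bigl(\prod_{p\le x}\bigl(1+\tfrac1p\bigr)\Bigr)^{2^k-1}\ \ll\ (\log x)^{2^k-1}$$
by Mertens' theorem, and together with the lower bound this yields $S_k(x)\asymp(\log x)^{2^k-1}$.

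The computations involved — the multiplicativity and local values of $J_k$, and the Mertens-type bound for the Euler product — are routine. The point that needs attention, and is really the heart of the matter, is the asymmetry between the two directions: for the lower bound it is enough to keep the range $m\le x$, on which the truncation $n_i\le x$ is invisible; but for the upper bound one cannot restrict to $m\le x$, since the lcm of $k$ integers up to $x$ can be as large as $x^k$, and one must instead exploit that such an lcm is automatically ``smooth in its prime-power components'', which is precisely what collapses the Euler product to a product over $p\le x$ of the correct order of magnitude.
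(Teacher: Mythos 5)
Your proof is correct, and its skeleton coincides with the paper's: both reduce $S_k(x)$ to the single-variable sum $\sum_m N_k(m;x)/m$, introduce the multiplicative function counting $k$-tuples with prescribed lcm (your $J_k$ is the paper's $a_k=\mu*\tau^k$, with the same local values $(\nu+1)^k-\nu^k$ and $J_k(p)=2^k-1$), and obtain the lower bound by noting that for $m\le x$ the truncation $n_i\le x$ is vacuous, then invoking Theorem \ref{Th_Luca_Toth} with $2^k-1$ in place of $k$. Where you genuinely diverge is the upper bound. The paper simply uses $[n_1,\ldots,n_k]\le n_1\cdots n_k\le x^k$ to get $S_k(x)\le\sum_{m\le x^k}a_k(m)/m$ and applies the same Luca--T\'oth asymptotic evaluated at $x^k$, which gives $\ll (k\log x)^{2^k-1}$ at no extra cost. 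You instead observe the sharper structural fact that any lcm of integers $\le x$ has all prime-power components $p^{v_p(m)}\le x$, and bound the sum over such $m$ by the finite Euler product $\prod_{p\le x}\sum_{p^a\le x}J_k(p^a)p^{-a}\ll\bigl(\prod_{p\le x}(1+1/p)\bigr)^{2^k-1}\ll(\log x)^{2^k-1}$ via Mertens. Your route is slightly longer but more self-contained (the cited theorem is needed only for the lower bound), and you also make explicit the positivity of the leading constant $C_{J_k}$, which the paper leaves implicit but which is needed for the lower bound to have the stated order; both arguments are sound.
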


\begin{proof} Since $[n_1,\ldots, n_k]\le n_1\cdots n_k\le x^k$, we can write
\begin{equation} \label{S_k}
S_k(x) = \sum_{n\le x^k} \frac{1}{n} \sum_{\substack{n_1,\ldots, n_k\le x \\ [n_1, \ldots, n_k]=n}} 1
\end{equation}

Let
\begin{equation*} \label{def_a_k_n}
a_k(n):= \sum_{\substack{n_1,\ldots, n_k\in \N \\ [n_1,\ldots,n_k]=n}} 1.
\end{equation*}

Now if $n\le x$, then the inner sum in \eqref{S_k} is just $a_k(n)$ (since $n\le x$ forces $n_1,\ldots,n_k\le x$), while in any case it is at
most $a_k(n)$. Thus
\begin{equation} \label{S_k_a_k_n}
\sum_{n\le x} \frac{a_k(n)}{n} \le S_k(x)\le \sum_{n\le x^k} \frac{a_k(n)}{n}.
\end{equation}

To see the properties of the function $a_k(n)$ write
\begin{equation*}
\sum_{d\mid n} a_k(d) = \sum_{d\mid n} \sum_{[n_1,\ldots,n_k]=d} 1 = \sum_{[n_1,\ldots,n_k]\mid n} 1
=  \sum_{n_1\mid n,\ldots,n_k\mid n} 1 = \tau(n)^k.
\end{equation*}

Therefore, by M\"obius inversion, we have $a_k=\mu*\tau^k$. This shows that $a_k(n)$ is multiplicative and its values at the prime powers $p^\nu$ are
given by $a_k(p^\nu)=(\nu+1)^k-\nu^k$ ($\nu \ge 1$). In particular, $a_k(p)=2^k-1$.

Applying Theorem \ref{Th_Luca_Toth} for the function $f(n)=a_k(n)$, with $2^k-1$ instead of $k$, we get that
\begin{equation} \label{c_log}
\sum_{n\le x} \frac{a_k(n)}{n} \sim \alpha_k (\log x)^{2^k-1} \qquad \text{ as $x\to \infty$},
\end{equation}
for some constant $\alpha_k$. Now, from \eqref{S_k_a_k_n} and \eqref{c_log} the result follows.
\end{proof}

\begin{remark} {\rm It is natural to expect that $S_k(x) \sim c_k (\log x)^{2^k-1}$ as $x\to \infty$, with a certain constant $c_k$. In fact,
in view of Theorem \ref{Th_L_m1_k_2}, the plausible conjecture is that
\begin{equation} \label{conj_L_minus_1}
S_k(x) = P_{2^k-1}(\log x) + O(x^{-r}),
\end{equation}
where $P_{2^k-1}(t)$ is a polynomial in $t$ of degree $2^k-1$ and $r$ is a positive real number. We pose as an open problem to find the
constants $c_k$ and to prove \eqref{conj_L_minus_1}.
}
\end{remark}

\section{The sums $U_k(x)$}

Next consider the sums $U_k(x)$ defined by \eqref{U_k_x}. In the case $k=2$,
\begin{equation*}
U_2(x)\sim \frac{6}{\pi^2} (\log x)^2\qquad \text{ as $x\to \infty$},
\end{equation*}
and it is not difficult to deduce a more precise asymptotic formula.

We have the following general result.

\begin{theorem} \label{Th_sum_U_k} Let $k\ge 3$ be a fixed integer. Then
\begin{equation*}
U_k(x) \asymp (\log x)^{2^k-2} \qquad \text{ as $x\to \infty$.}
\end{equation*}
\end{theorem}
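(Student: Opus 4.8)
The plan is to mimic the proof of Theorem \ref{Th_sum_S_k}, but now working with the coprimality condition, which forces one factor of $(\log x)$ to be lost. First I would observe that if $[n_1,\ldots,n_k]=n$ and $(n_1,\ldots,n_k)=1$, then $n$ is squarefree only at primes where all but one exponent vanish; more to the point, I would introduce
\begin{equation*}
b_k(n):= \sum_{\substack{n_1,\ldots,n_k\in \N\\ [n_1,\ldots,n_k]=n,\ (n_1,\ldots,n_k)=1}} 1,
\end{equation*}
and, exactly as in the previous proof, note that since $[n_1,\ldots,n_k]\le n_1\cdots n_k\le x^k$ one has
\begin{equation*}
\sum_{n\le x} \frac{b_k(n)}{n} \le U_k(x)\le \sum_{n\le x^k} \frac{b_k(n)}{n}.
\end{equation*}
So the whole problem reduces to understanding $\sum_{n\le y} b_k(n)/n$ for $y=x$ and $y=x^k$ and checking that both have the same order $(\log y)^{2^k-2}$, whence (since $\log(x^k)=k\log x$) the squeeze gives $U_k(x)\asymp (\log x)^{2^k-2}$.

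Next I would establish that $b_k$ is multiplicative and compute its prime-power values. The cleanest route is a divisor-sum identity analogous to $\sum_{d\mid n}a_k(d)=\tau(n)^k$: writing each $n_i$ as $d\, m_i$ with $d=(n_1,\ldots,n_k)$ shows $a_k(n)=\sum_{d\mid n} b_k(n/d)$, i.e. $a_k = \1 * b_k$, hence $b_k = \mu * a_k = \mu*\mu*\tau^k$; in particular $b_k$ is multiplicative. Evaluating at a prime $p$: the tuples with $[n_1,\ldots,n_k]=p$ are exactly the nonzero-exponent patterns in $\{0,1\}^k$, of which there are $2^k-1$, and all of these already have gcd $1$ except we must exclude the all-ones pattern (gcd $=p$), so $b_k(p)=2^k-2$. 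For higher powers, $b_k(p^\nu)=a_k(p^\nu)-a_k(p^{\nu-1})=((\nu+1)^k-\nu^k)-(\nu^k-(\nu-1)^k)=\nu^{O(1)}$, uniformly in $p$ and polynomially bounded in $\nu$. Thus $f:=b_k$ satisfies hypotheses (i) and (ii) of Theorem \ref{Th_Luca_Toth} with the integer $2^k-2$ in place of $k$.

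Applying Theorem \ref{Th_Luca_Toth} then yields
\begin{equation*}
\sum_{n\le y} \frac{b_k(n)}{n} = \frac{1}{(2^k-2)!}\, C_{b_k} (\log y)^{2^k-2} + O\left((\log y)^{2^k-3}\right)
\end{equation*}
as $y\to\infty$, with $C_{b_k}=\prod_p (1-1/p)^{2^k-2}\sum_{\nu\ge 0} b_k(p^\nu)/p^\nu>0$ (positivity because every local factor is a convergent positive series with positive leading behaviour). Feeding $y=x$ and $y=x^k$ into the two sides of the squeeze, and using that $(\log x^k)^{2^k-2}=k^{2^k-2}(\log x)^{2^k-2}$, both bounds are $\asymp (\log x)^{2^k-2}$, which is the claim. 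One mild check worth spelling out, since $k\ge 3$ makes $2^k-2\ge 6\ge 2$, is that the exponent $2^k-2$ is a genuine positive integer $\ge 2$ so that Theorem \ref{Th_Luca_Toth} applies verbatim; for $k=2$ it degenerates to exponent $2$, consistent with the stated $U_2(x)\sim (6/\pi^2)(\log x)^2$.

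The main obstacle is nothing deep but rather bookkeeping: one must correctly identify the prime value $b_k(p)=2^k-2$ (it is easy to miscount by forgetting to remove the diagonal tuple, or by conflating $b_k(p)$ with $a_k(p)=2^k-1$), and one must verify that $b_k$ — a double Möbius convolution $\mu*\mu*\tau^k$ — really is multiplicative with the claimed polynomial growth at prime powers so that hypothesis (ii) holds with a constant uniform in $p$. Everything else is a direct transcription of the argument already used for $S_k(x)$.
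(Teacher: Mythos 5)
Your proof is correct and follows essentially the same route as the paper: squeeze $U_k(x)$ between $\sum_{n\le x} b_k(n)/n$ and $\sum_{n\le x^k} b_k(n)/n$, identify $b_k=\mu*\mu*\tau^k$ as multiplicative with $b_k(p)=2^k-2$ and $b_k(p^\nu)=(\nu+1)^k-2\nu^k+(\nu-1)^k$, and apply Theorem \ref{Th_Luca_Toth}. The only cosmetic difference is that you obtain $b_k=\mu*a_k$ from the gcd-factorization $a_k=\1*b_k$ rather than by the paper's direct M\"obius computation of $\sum_{d\mid n}b_k(d)$, which changes nothing essential.
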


\begin{proof} Similar to the proof of Theorem \ref{Th_sum_S_k}. We have
\begin{equation} \label{U_k}
U_k(x)= \sum_{\substack{n_1,\ldots,n_k\le x\\ (n_1,\ldots,n_k)=1}} \frac1{[n_1,\ldots,n_k]}
= \sum_{n\le x^k} \frac{1}{n} \sum_{\substack{n_1,\ldots, n_k\le x \\ [n_1, \ldots, n_k]=n\\(n_1,\ldots,n_k)=1}} 1.
\end{equation}

Let
\begin{equation*}
b_k(n) = \sum_{\substack{n_1,\ldots, n_k\in \N \\ [n_1,\ldots,n_k]=n\\(n_1,\ldots,n_k)=1}} 1.
\end{equation*}

Now if $n\le x$, then the inner sum in \eqref{U_k} is exactly $b_k(n)$, while in any case it is at
most $b_k(n)$. Thus
\begin{equation} \label{b_k_n}
\sum_{n\le x} \frac{b_k(n)}{n} \le U_k(x)\le \sum_{n\le x^k} \frac{b_k(n)}{n}.
\end{equation}

Write
\begin{equation*}
\sum_{d\mid n} b_k(d) = \sum_{d\mid n} \sum_{\substack{[n_1,\ldots,n_k]=d\\(n_1,\ldots,n_k)=1}} 1 = \sum_{\substack{[n_1,\ldots,n_k]
\mid n \\(n_1,\ldots,n_k)=1}} 1
\end{equation*}
\begin{equation*}
=  \sum_{n_1\mid n,\ldots,n_k \mid n} \sum_{\delta \mid (n_1,\ldots,n_k)}
\mu(\delta) = \sum_{\delta a_1b_1=n,\ldots,\delta a_kb_k=n} \mu(\delta)
\end{equation*}
\begin{equation*}
= \sum_{\delta t=n} \mu(\delta) \sum_{a_1b_1=t} 1 \cdots \sum_{a_kb_k=t} 1 = \sum_{\delta t=n} \mu(\delta) \tau(t)^k.
\end{equation*}

Therefore, by M\"obius inversion $b_k=\mu* \mu*\tau^k$. This shows that $b_k(n)$ is multiplicative and its values at the prime powers $p^\nu$ are
given by $b_k(p^\nu)=(\nu+1)^k-2\nu^k+(\nu-1)^k$ ($\nu \ge 1$). In particular, $b_k(p)=2^k-2$.

Applying now Theorem \ref{Th_Luca_Toth} for the function $f(n)=b_k(n)$, with $2^k-2$ instead of $k$,
we  deduce that
\begin{equation} \label{ccc_log}
\sum_{n\le x} \frac{b_k(n)}{n} \sim \alpha_k' (\log x)^{2^k-2} \qquad \text{ as $x\to \infty$}
\end{equation}
for some constant $\alpha_k'$. Now, from \eqref{b_k_n} and \eqref{ccc_log} we have
$U_k(x) \asymp  (\log x)^{2^k-2}$.
\end{proof}

\begin{remark} {\rm We conjecture that $U_k(x) \sim d_k (\log x)^{2^k-2}$ as $x\to \infty$, with a certain constant $d_k$.
The sums $S_k(x)$ and $U_k(x)$ are strongly related. Namely, by grouping the terms according to the values $(n_1,\ldots,n_k)=d$ one
obtains
\begin{equation} \label{S_U}
S_k(x)= \sum_{d\le x} \frac1{d} U_k(x/d),
\end{equation}
and conversely,
\begin{equation} \label{U_S}
U_k(x)= \sum_{d\le x} \frac{\mu(d)}{d} S_k(x/d).
\end{equation}

If $U_k(x) \sim d_k (\log x)^{2^k-2}$ holds, then by \eqref{S_U} it follows that $S_k(x) \sim \frac{d_k}{2^k-1} (\log x)^{2^k-1}$.
Conversely, assume that the asymptotic formula \eqref{conj_L_minus_1} is true, where $c_k$ is the leading coefficient of the
polynomial $P_{2^k-1}(t)$. Then \eqref{U_S}, together with the well known results
\begin{equation*}
\sum_{n\le x} \frac{\mu(n)}{n} = O((\log x)^{-1}), \qquad \sum_{n=1}^{\infty} \frac{\mu(n)\log n}{n} = -1,
\end{equation*}
and Shapiro's estimates \cite[Th.\ 4.1]{Sha1950}
\begin{equation*}
\sum_{n \le x} \frac{\mu(n)}{n} \left(\log \left(\frac{x}{n}\right)\right)^m = m (\log x)^{m-1} + \sum_{i=1}^{m-2} c_j^{(m)} (\log x )^j +O(1),
\end{equation*}
valid for any integer $m\ge 2$, where $c_i^{(m)}$ are constants, imply that
\begin{equation*}
U_k(x)= (2^k-1) c_k(\log x)^{2^k-2}+ b_{2^k-3}(\log x)^{2^k-3}+\cdots + b_1 \log x+ O(1),
\end{equation*}
with some constants $b_i$.
}
\end{remark}

\section{The sums $V_k(x)$}

The sums $V_k(x)$ defined by \eqref{V_k_x} are sums of integers. In the case $k=2$ we have,
according to \eqref{gcd_m_n},
\begin{equation} \label{V_2}
V_2(x) = \sum_{m,n\le x} (m,n) \sim \frac{6}{\pi^2}x^2\log x.
\end{equation}

\begin{theorem} \label{Th_sum_V_k} Let $k\ge 3$ be a fixed integer. Then
\begin{equation*}
x^k\ll V_k(x) \ll x^k (\log x)^{2^k-2}\qquad \text{ as $x\to \infty$.}
\end{equation*}
\end{theorem}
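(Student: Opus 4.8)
The lower bound is immediate: $[n_1,\ldots,n_k]$ divides $n_1\cdots n_k$, so every summand of $V_k(x)$ is a positive integer, hence at least $1$, and $V_k(x)\ge\lfloor x\rfloor^k\gg x^k$.

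For the upper bound I would peel off the variables one at a time. From $[n_1,\ldots,n_j]=[[n_1,\ldots,n_{j-1}],n_j]$ and $[a,b]=ab/(a,b)$ we get
\[
\frac{n_1\cdots n_j}{[n_1,\ldots,n_j]}=(n_j,[n_1,\ldots,n_{j-1}])\cdot\frac{n_1\cdots n_{j-1}}{[n_1,\ldots,n_{j-1}]},
\]
which invites carrying a multiplicative weight along the lcm: for multiplicative $f$ set
\[
\Sigma_j(x;f):=\sum_{n_1,\ldots,n_j\le x}\frac{n_1\cdots n_j}{[n_1,\ldots,n_j]}\,f([n_1,\ldots,n_j]),\qquad V_k(x)=\Sigma_k(x;\1).
\]
The plan is to prove that a single peeling step satisfies $\Sigma_j(x;f)\ll x(\log x)^{f(p)-1}\,\Sigma_{j-1}(x;\widetilde f)$ for a multiplicative $\widetilde f$ with $\widetilde f(p)=2f(p)+O(1/p)$ and $\widetilde f(p^{\nu})=\nu^{O(1)}$ uniformly in $p$. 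Iterating $k-1$ times starting from $f=\1$, and then estimating the one-variable sum $\Sigma_1(x;g)=\sum_{n\le x}g(n)$ at the end, finishes the argument.

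The content is the single step. Writing $L=[n_1,\ldots,n_{j-1}]$ and isolating the sum over $n_j$,
\[
\Sigma_j(x;f)=\sum_{n_1,\ldots,n_{j-1}\le x}\frac{n_1\cdots n_{j-1}}{L}\Bigl(\sum_{n_j\le x}(n_j,L)\,f([L,n_j])\Bigr),
\]
I would factor $n_j=uv$, where $u$ is the part of $n_j$ built from the primes dividing $L$ and $(v,L)=1$. Then $(n_j,L)=(u,L)$ and $[L,n_j]=[L,u]\cdot v$ with $[L,u]$ and $v$ coprime, so $f([L,n_j])=f([L,u])f(v)$ and the inner sum becomes $\sum_{u\mid L^{\infty}}(u,L)f([L,u])\sum_{v\le x/u,\ (v,L)=1}f(v)$. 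Using $\sum_{v\le z}f(v)\ll z(\log z)^{f(p)-1}$ — a consequence of Theorem~\ref{Th_Luca_Toth} (in the routine form allowing $f(p)=m+O(1/p)$) together with partial summation — this is $\ll x(\log x)^{f(p)-1}\sum_{u\mid L^{\infty}}\tfrac{(u,L)f([L,u])}{u}$, and the last sum factors over $p\mid L$ with local factor
\[
\sum_{b\ge 0}\frac{p^{\min(v_p(L),b)}f(p^{\max(v_p(L),b)})}{p^{b}}=(v_p(L)+1)\,f(p^{v_p(L)})\bigl(1+O(1/p)\bigr),
\]
so $\sum_{u\mid L^{\infty}}\tfrac{(u,L)f([L,u])}{u}\le\widetilde f(L)$ with $\widetilde f$ multiplicative, $\widetilde f(p)=2f(p)+O(1/p)$, $\widetilde f(p^{\nu})=\nu^{O(1)}$, as claimed. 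Hence the prime value doubles at each step: starting from $\1$ it runs through $1,2,2^{2},\ldots,2^{k-1}$; the $k-1$ steps contribute $x^{k-1}(\log x)^{\sum_{i=0}^{k-2}(2^{i}-1)}$, and the final one-variable sum (whose weight $g$ has $g(p)=2^{k-1}$) is $\ll x(\log x)^{2^{k-1}-1}$. Multiplying out gives $V_k(x)\ll x^{k}(\log x)^{2^{k}-k-1}$, and $2^{k}-k-1\le 2^{k}-2$ for $k\ge3$.

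The step I expect to be most delicate is precisely the splitting $n_j=uv$. The lazy alternative $f([L,n_j])\le f(Ln_j)\le f(L)f(n_j)$ (valid since $[L,n_j]\mid Ln_j$) would replace the recursion $t\mapsto 2t$ for the prime value $t=f(p)$ by $t\mapsto t(t+1)$, which grows like Sylvester's sequence — doubly exponentially in $k$ — and fails to yield the bound $2^{k}-2$ once $k\ge4$. One must also check that the $O(1/p)$ corrections in the local factors do not accumulate into spurious powers of $\log x$; they do not, because at every stage the relevant multiplicative function has prime value $2f(p)+O(1/p)$, and such $O(1/p)$ perturbations leave the order of $\sum_{n\le x}\widetilde f(n)$ unchanged. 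Since $k$ is fixed and $2^{k}-k-1$ is strictly below $2^{k}-2$, there is in any case slack to absorb these harmless losses.
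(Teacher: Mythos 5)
Your lower bound is exactly the paper's (each summand is a positive integer). Your upper bound, however, takes a genuinely different route. The paper's proof is four lines: group the $k$-tuples by their gcd $d$, write $n_i=dm_i$ with $(m_1,\ldots,m_k)=1$, bound $\sum_{d\le x/M}d^{k-1}\ll (x/M)^k$ with $M=\max(m_1,\ldots,m_k)$, use $m_1\cdots m_k\le M^k$ to get $V_k(x)\ll x^k U_k(x)$, and quote Theorem \ref{Th_sum_U_k}; the exponent $2^k-2$ is simply inherited from $U_k$. You instead peel off one variable at a time, carrying a multiplicative weight along the lcm and splitting $n_j=uv$ into its $L$-smooth and $L$-coprime parts. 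Your local-factor computation is correct: $\sum_{b\ge 0}p^{\min(a,b)}f(p^{\max(a,b)})p^{-b}=(a+1)f(p^a)+O(a^{O(1)}/p)$ for $a=v_p(L)\ge1$, so the prime value of the weight doubles at each step, and the bookkeeping does give $V_k(x)\ll x^k(\log x)^{2^k-k-1}$. That is more work than the paper's reduction, but it buys a strictly sharper exponent, which moreover matches the conjectured order $\lambda_k x^k(\log x)^{2^k-k-1}$ stated in the remark following Theorem \ref{Th_sum_V_k}; written out in full, your argument would improve the theorem, not just reprove it.

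Two points to nail down when writing it up. First, Theorem \ref{Th_Luca_Toth} as stated requires $f(p)=m$ exactly, with $m$ a positive integer, whereas your weights satisfy only $\widetilde f(p)=m+O(1/p)$; this is fixable either by adapting the elementary proof behind Theorem \ref{Th_Luca_Toth}, or more cheaply by writing $\widetilde f=f^{\sharp}*r$ with $f^{\sharp}(p)=m$ exactly, noting $\widetilde f\le f^{\sharp}*|r|$ pointwise and $\sum_{d}|r(d)|/d<\infty$, so the perturbation costs only a constant. Second, the bound $\sum_{n\le x}f(n)\ll x(\log x)^{m-1}$ does not follow from the leading-order asymptotic of $\sum_{n\le x}f(n)/n$ by partial summation (that only gives $x(\log x)^{m}$, which would push your final exponent up to $2^k-1$ and lose even to the paper); you need the two-term expansion with error $O((\log x)^{m-2})$ that Theorem \ref{Th_Luca_Toth} provides, after which the main terms cancel in $xB(x)-\int_1^x B(t)\,dt$ and the exponent $m-1$ survives. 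Both issues are routine, but since the whole gain of your method over the paper's hinges on the "$-1$" at every stage, they deserve explicit treatment.
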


\begin{proof} The lower bound is trivial by $n_1\cdots n_k\ge [n_1,\ldots,n_k]$. Also, by grouping the terms according to
the values $(n_1,\ldots,n_k)=d$, and by denoting $M=\max(m_1,\ldots,m_k)$ we have
\begin{equation*}
V_k(x)= \sum_{\substack{dm_1,\ldots, dm_k\le x \\ (m_1,\ldots, m_k)=1}} \frac{dm_1\cdots dm_k}{[dm_1,\ldots,dm_k]}
= \sum_{\substack{m_1,\ldots, m_k\le x \\ (m_1,\ldots, m_k)=1}} \frac{m_1\cdots m_k}{[m_1,\ldots,m_k]} \sum_{d\le x/M} d^{k-1}
\end{equation*}
\begin{equation*}
\ll x^k \sum_{\substack{m_1,\ldots, m_k\le x \\ (m_1,\ldots, m_k)=1}} \frac{m_1\cdots m_k}{[m_1,\ldots,m_k] M^k }
\le x^k \sum_{\substack{m_1,\ldots, m_k\le x \\ (m_1,\ldots, m_k)=1}} \frac1{[m_1,\ldots,m_k]} = x^k U_k(x),
\end{equation*}
and the upper bound follows from Theorem \ref{Th_sum_U_k}.
\end{proof}

\begin{remark} {\rm We conjecture that $V_k(x) \sim \lambda_k x^k (\log x)^{2^k-k-1}$ as $x\to \infty$, with a certain constant $\lambda_k$, in
accordance with \eqref{V_2} for the case $k=2$. We pose as another open problem to prove this and to find the constants $\lambda_k$.
}
\end{remark}

\section{The sums $T_k(x)$}

Finally, we investigate the sums $T_k(x)$ defined by \eqref{T_k_x} and establish an asymptotic formula with remainder term for it.
We give a short direct proof in the case $k=2$. Then for any fixed $k\ge 2$ we use multiple Dirichlet series to get the result.

Let
\begin{equation}
F(n): =\sum_{k=1}^n \frac{(k,n)}{[k,n]} \qquad (n\in \N).
\end{equation}

\begin{theorem} \label{Th_sum_T_2}
\begin{equation}
\sum_{n\le x} F(n) = 2 x + O\left((\log x)^2\right),
\end{equation}
that is,
\begin{equation*}
\sum_{m,n\le x} \frac{(m,n)}{[m,n]} = 3 x + O\left((\log x)^2\right).
\end{equation*}
\end{theorem}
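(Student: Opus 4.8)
The plan is to reduce $F(n)$ to a nice arithmetic function by writing $(k,n)/[k,n] = (k,n)^2/(kn)$ and splitting the sum $\sum_{k=1}^n$ according to the value $d=(k,n)$. Writing $k = dt$ with $(t,n/d)=1$ and $1\le t\le n/d$, one gets
\begin{equation*}
F(n) = \frac1n \sum_{d\mid n} d \sum_{\substack{t\le n/d\\ (t,n/d)=1}} \frac1t = \frac1n \sum_{d\mid n} d\, g(n/d),
\end{equation*}
where $g(m):=\sum_{t\le m,\ (t,m)=1} 1/t$ is essentially the function $h$ from the proof of Theorem \ref{Th_L_m1_k_2} but with the divisor weight $d$ now attached to the complementary divisor. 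So $nF(n) = (f * g)(n)$ where $f(n)=n$; equivalently $F = \frac1n(\mathrm{Id}*g)$. The key point is that $g(m) = \sum_{e\mid m}\frac{\mu(e)}{e}\sum_{j\le m/e}\frac1j = \sum_{e\mid m}\frac{\mu(e)}{e}\bigl(\log(m/e)+\gamma+O(e/m)\bigr)$, so $g(m) = \frac{\varphi(m)}{m}\bigl(\gamma + \log m\bigr) - \sum_{e\mid m}\frac{\mu(e)\log e}{e} + O(2^{\omega(m)}/m)$, which is of size $O(\log m)$ on average.

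Next I would sum: $\sum_{n\le x} F(n) = \sum_{n\le x}\frac1n (\mathrm{Id}*g)(n) = \sum_{de\le x} e\, g(d) \cdot \frac1{de} = \sum_{d\le x} \frac{g(d)}{d} \sum_{e\le x/d} 1 = \sum_{d\le x}\frac{g(d)}{d}\lfloor x/d\rfloor$. Hence $\sum_{n\le x}F(n) = x\sum_{d\le x}\frac{g(d)}{d^2} + O\bigl(\sum_{d\le x}\frac{g(d)}{d}\bigr)$. Since $g(d)\ge 0$ and $\sum_{d\le x} g(d)/d \ll (\log x)^2$ (each of the two "main" pieces of $g$ contributes $O((\log x)^2)$, e.g. $\sum_{d\le x}\frac{\varphi(d)\log d}{d^2} \ll (\log x)^2$, and the error piece $\sum 2^{\omega(d)}/d^2$ converges), the error term is $O((\log x)^2)$. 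It remains to show $x\sum_{d\le x}\frac{g(d)}{d^2} = 2x + O((\log x)^2)$, i.e. that $\sum_{d\ge 1}\frac{g(d)}{d^2}$ converges to $2$ and the tail $\sum_{d>x}\frac{g(d)}{d^2}$ is $O((\log x)^2/x)$. Convergence and the tail bound are routine from $g(d)\ll_\varepsilon d^\varepsilon$; the value $\sum_d g(d)/d^2 = 2$ should come out of a Dirichlet-series computation: $\sum_d g(d) d^{-s}$ factors, using $g = \sum_{e\mid d}\mu(e)/e \cdot (\text{partial harmonic sum})$, into something expressible via $\zeta(s)$ and $\zeta(s+1)$, and one evaluates the relevant product at $s=2$. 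Concretely, $F$ is multiplicative with $F(p^\nu) = \frac1{p^\nu}\sum_{i=0}^\nu p^i g(p^{\nu-i})$ and $g(p^j) = H_{p^j} - \frac1p H_{p^{j-1}}$ where $H_m=\sum_{t\le m}1/t$; summing the Euler product $\prod_p \sum_\nu F(p^\nu) p^{-2\nu}$ and simplifying should give $3$ for the two-variable normalization (hence $2$ after subtracting the diagonal contribution $\sum_{n\le x}F(n)=2x+\cdots$, matching $3x = 2\cdot 2x - \sum_{n\le x} F(n,n)/\cdots$; more directly the symmetric-sum identity gives the constant $3$ from $2\cdot 2 - 1$).

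The main obstacle is the exact evaluation of the constant — showing $\sum_{d\ge1} g(d)/d^2 = 2$ (equivalently that the Euler product collapses to a clean rational value) rather than merely that it converges. I expect this to work because $g$ is built from $\mu/e$ convolved with the harmonic summatory function, so its Dirichlet series has a factor of $1/\zeta(s+1)$ against $\zeta(s)$-type pieces, and the telescoping $g(p^j) - g(p^{j-1}) = 1/p^j$ (valid for $j\ge 1$, from $H_{p^j}-H_{p^{j-1}}$ being a sum we can control) forces enough cancellation; I would carry out this computation carefully to pin down the constant and confirm the diagonal term $\sum_{n\le x} F(n) = 2x$ is consistent with the stated $\sum_{m,n\le x}(m,n)/[m,n] = 3x + O((\log x)^2)$ via the symmetrization identity $\sum_{m,n\le x}\psi(m,n) = 2\sum_{n\le x}\sum_{k=1}^n\psi(k,n) - \sum_{n\le x}\psi(n,n)$ with $\psi(n,n) = (n,n)/[n,n] = 1$, giving exactly $2\cdot 2x - x = 3x$ up to the error term.
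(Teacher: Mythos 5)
Your reduction is, up to regrouping, the paper's own argument: writing $nF(n)=(\mathrm{Id}*g)(n)$ with $g(m)=\sum_{t\le m,\,(t,m)=1}1/t$ is the same convolution the paper uses, since the paper's identity $F(n)=\frac1n\sum_{d\mid n}\frac{\phi_2(d)}{d}H_{n/d}$ (with the Jordan totient $\phi_2$) comes from $\phi_2(d)/d=(\mathrm{Id}*\tfrac{\mu}{\mathrm{id}})(d)$, while your $g=\tfrac{\mu}{\mathrm{id}}*H$ just associates the same three factors differently; and your summation $\sum_{n\le x}F(n)=\sum_{d\le x}\frac{g(d)}{d}\lfloor x/d\rfloor=x\sum_{d\le x}\frac{g(d)}{d^2}+O((\log x)^2)$, with $g(d)\le H_d\ll\log d$, reproduces the paper's error analysis. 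The one step you leave open, $\sum_{d\ge1}g(d)/d^2=2$, closes exactly along the line you sketch and is precisely where the paper invokes Euler's identity $\sum_{n\ge1}H_n/n^2=2\zeta(3)$: from $g(m)=\sum_{e\mid m}\frac{\mu(e)}{e}H_{m/e}$ one gets $\sum_m g(m)m^{-s}=\zeta(s+1)^{-1}\sum_j H_j\,j^{-s}$, and at $s=2$ this equals $2\zeta(3)/\zeta(3)=2$; the tail bound $x\sum_{d>x}g(d)/d^2\ll\log x$ is immediate from $g(d)\ll\log d$, so no Euler-product manipulation is needed. Two cautions: the telescoping identity you float, $g(p^j)-g(p^{j-1})=1/p^j$, is false (e.g.\ $g(2)=1=g(1)$), though nothing in your main line depends on it; and the constant $3$ in the two-variable statement comes, as you correctly say at the end, from the symmetrization identity with diagonal $\psi(n,n)=1$, i.e.\ $2\cdot 2x-x=3x$, not from any further Euler-product computation.
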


\begin{proof} Let $\phi_2(n)=\sum_{d\mid n} d^2\mu(n/d)$ be the Jordan function of order $2$. We have
\begin{equation*}
F(n) = \sum_{k=1}^n \frac{(k,n)^2}{kn} = \frac1{n} \sum_{k=1}^n \frac1{k} \sum_{d\mid (k,n)} \phi_2(d)
= \frac1{n} \sum_{d\mid n} \phi_2(d) \sum_{\substack{k=1\\ d\mid k}}^n \frac1{k}
\end{equation*}
\begin{equation*}
= \frac1{n} \sum_{d\mid n} \frac{\phi_2(d)}{d} \sum_{j=1}^{n/d}  \frac1{j}=
\frac1{n} \sum_{d\mid n} \frac{\phi_2(d)}{d} H_{n/d},
\end{equation*}
where $H_m= \sum_{j=1}^m 1/j$ is the harmonic sum. Therefore, using that
\begin{equation*}
\sum_{n\le x} \frac{\phi_2(n)}{n^2} =\frac{x}{\zeta(3)} + O(1),
\end{equation*}
we deduce
\begin{equation*}
\sum_{n\le x} F(n) = \sum_{dm\le x} \frac{\phi_2(d)}{d^2m} H_m = \sum_{m\le x} \frac{H_m}{m} \sum_{d\le x/m} \frac{\phi_2(d)}{d^2}
\end{equation*}
\begin{equation*}
=\sum_{m\le x} \frac{H_m}{m} \left(\frac{x}{\zeta(3)m} +O(1)\right)
= \frac{x}{\zeta(3)} \sum_{m\le x} \frac{H_m}{m^2} + O\left(\sum_{m\le x} \frac{H_m}{m}\right)
\end{equation*}
\begin{equation*}
= \frac{x}{\zeta(3)} \sum_{m=1}^{\infty} \frac{H_m}{m^2} + O(x\sum_{m>x} \frac{H_m}{m^2})+ O\left(\sum_{m\le x} \frac{H_m}{m}\right)
\end{equation*}
\begin{equation*}
= \frac{x}{\zeta(3)}\cdot 2\zeta(3) + O\left(x\sum_{m>x} \frac{\log m}{m^2}\right)+ O\left(\sum_{m\le x} \frac{\log m}{m}\right)
= 2x + O((\log x)^2),
\end{equation*}
by using that
\begin{equation} \label{form_Euler}
\sum_{n=1}^{\infty} \frac{H_n}{n^2}= 2 \zeta(3),
\end{equation}
which is Euler's result.
\end{proof}

\begin{theorem} \label{Th_sum_T_k} If $k\ge 2$, then
\begin{equation*}
T_k(x) = \beta_k x+ O\left((\log x)^{2^k-2}\right),
\end{equation*}
where
\begin{align*}
\beta_k:= \sum_{\substack{n_1,\ldots,n_k=1\\ (n_1,\ldots,n_k)=1}}^{\infty} \frac1{[n_1,\ldots,n_k]\max(n_1,\ldots,n_k)}
=\frac1{\zeta(2)} \sum_{n_1,\ldots,n_k=1}^{\infty} \frac1{[n_1,\ldots,n_k]\max(n_1,\ldots,n_k)}.
\end{align*}
\end{theorem}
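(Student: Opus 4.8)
The plan is to reduce $T_k(x)$ to a sum over coprime $k$-tuples by factoring out the gcd, then to isolate the largest coordinate and replace a floor function by its real counterpart, and finally to control the tail that arises by partial summation, using the bound $U_k(t)\ll(\log t)^{2^k-2}$ provided by Theorem~\ref{Th_sum_U_k} for $k\ge 3$ and by the known asymptotics of $U_2(x)$ for $k=2$. Concretely, I would write each $k$-tuple as $(dm_1,\ldots,dm_k)$ with $d=(n_1,\ldots,n_k)$ and $(m_1,\ldots,m_k)=1$; then $[n_1,\ldots,n_k]=d[m_1,\ldots,m_k]$, so the summand $(n_1,\ldots,n_k)/[n_1,\ldots,n_k]$ collapses to $1/[m_1,\ldots,m_k]$, while $n_i\le x$ becomes $m_i\le x/d$. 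Summing over the coprime tuples first and setting $M=\max(m_1,\ldots,m_k)$, the condition on $d$ becomes $d\le x/M$, whence
\[
T_k(x)=\sum_{\substack{(m_1,\ldots,m_k)=1\\ M\le x}}\frac{\lfloor x/M\rfloor}{[m_1,\ldots,m_k]}.
\]

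Replacing $\lfloor x/M\rfloor$ by $x/M+O(1)$, the $O(1)$-part contributes at most $\sum_{(m_i)=1,\,M\le x}1/[m_1,\ldots,m_k]=U_k(x)=O\bigl((\log x)^{2^k-2}\bigr)$. For the main term I first verify that $\beta_k=\sum_{(m_1,\ldots,m_k)=1}1/\bigl([m_1,\ldots,m_k]M\bigr)$ converges: from $[m_1,\ldots,m_k]\le m_1\cdots m_k\le M^k$ the general term is at most $[m_1,\ldots,m_k]^{-1-1/k}$, and grouping tuples by the value $n$ of their lcm gives $\beta_k\le\sum_{n\ge 1}a_k(n)n^{-1-1/k}$; since $0\le a_k(n)\le\sum_{d\mid n}a_k(d)=\tau(n)^k\ll_\varepsilon n^\varepsilon$ and $1+1/k>1$, this series is finite (equivalently, the multiple Dirichlet series $\sum_{n_1,\ldots,n_k}[n_1,\ldots,n_k]^{-s}$ converges for $\RE s>1$). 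Then
\[
x\sum_{\substack{(m_i)=1\\ M\le x}}\frac1{[m_1,\ldots,m_k]M}=\beta_k x-x\sum_{\substack{(m_i)=1\\ M>x}}\frac1{[m_1,\ldots,m_k]M},
\]
and, putting $u_k(m)=\sum_{\max m_i=m,\,(m_i)=1}1/[m_1,\ldots,m_k]$ so that $U_k(y)=\sum_{m\le y}u_k(m)$, partial summation gives
\[
\sum_{m>x}\frac{u_k(m)}{m}\ll\int_x^{\infty}\frac{U_k(t)}{t^2}\,dt\ll\int_x^{\infty}\frac{(\log t)^{2^k-2}}{t^2}\,dt\ll\frac{(\log x)^{2^k-2}}{x}.
\]
Hence the tail contributes $O\bigl((\log x)^{2^k-2}\bigr)$, and assembling the pieces gives $T_k(x)=\beta_k x+O\bigl((\log x)^{2^k-2}\bigr)$.

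To obtain the second expression for $\beta_k$, I would apply the same gcd-splitting to the full multiple series: writing $(n_1,\ldots,n_k)=(dm_1,\ldots,dm_k)$ with $(m_i)=1$ and using $\max(n_1,\ldots,n_k)=d\max(m_1,\ldots,m_k)$ together with $[n_1,\ldots,n_k]=d[m_1,\ldots,m_k]$,
\[
\sum_{n_1,\ldots,n_k\ge 1}\frac1{[n_1,\ldots,n_k]\max(n_1,\ldots,n_k)}=\sum_{d\ge 1}\frac1{d^2}\sum_{(m_i)=1}\frac1{[m_1,\ldots,m_k]\max(m_1,\ldots,m_k)}=\zeta(2)\,\beta_k,
\]
which is the claimed identity.

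The main obstacle is the size of the tail. The crude bound $1/M\le 1/x$ on the range $M>x$ only yields $x\sum_{M>x}1/\bigl([m_1,\ldots,m_k]M\bigr)\le\sum_{M>x}1/[m_1,\ldots,m_k]$, a divergent sum, so it is essential to exploit the $U_k$-estimate through partial summation; this is exactly where Theorem~\ref{Th_sum_U_k} enters and where the exponent $2^k-2$ in the error term comes from, rather than a saving of a power of $x$. One should also check that the argument runs uniformly down to $k=2$, where $U_2(x)\ll(\log x)^2=(\log x)^{2^2-2}$, so that the present theorem contains Theorem~\ref{Th_sum_T_2}.
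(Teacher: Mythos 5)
Your proof is correct, and while the opening reduction coincides with the paper's --- both factor out $d=(n_1,\ldots,n_k)$ to arrive at $T_k(x)=\sum_{(m_1,\ldots,m_k)=1,\,M\le x}\lfloor x/M\rfloor/[m_1,\ldots,m_k]$ and then split off the $O(1)$ from $\lfloor x/M\rfloor=x/M+O(1)$ --- your treatment of the two resulting error terms takes a genuinely different route. The paper stays self-contained at that point: it introduces the multiple Dirichlet series $H(s_1,\ldots,s_k)$ of the coprime-supported function $h$, shows via the Euler product that $H(\varepsilon,\ldots,\varepsilon)\asymp\varepsilon^{-(2^k-2)}$, bounds both the truncated sum $\sum_{n_i\le x}h$ and the tail of $\sum h/\max$ by Rankin's trick (getting $x^{\varepsilon}H(\varepsilon/k,\ldots,\varepsilon/k)$ and $kx^{\varepsilon-1}H(\varepsilon/k,\ldots,\varepsilon/k)$), and finally chooses $\varepsilon=1/\log x$. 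You instead recognize the first error sum as exactly $U_k(x)$ and invoke Theorem \ref{Th_sum_U_k}, and you control the tail $\sum_{M>x}1/([m_1,\ldots,m_k]M)$ by partial summation against $U_k(t)\ll(\log t)^{2^k-2}$; your convergence check for $\beta_k$ via $1/M\le[m_1,\ldots,m_k]^{-1/k}$ and $a_k(n)\le\tau(n)^k\ll_{\delta}n^{\delta}$ is sound, as is the gcd-splitting of the unrestricted series that produces the factor $1/\zeta(2)$. Both routes yield the same exponent $2^k-2$; yours is shorter and more elementary, but it makes the theorem depend on Theorem \ref{Th_sum_U_k} (hence on the Luca--T\'oth mean-value theorem), whereas the paper's Dirichlet-series argument is independent of the $U_k$ results. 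One point to make explicit in a final write-up: Theorem \ref{Th_sum_U_k} is stated only for $k\ge3$, so for $k=2$ you need the bound $U_2(x)\ll(\log x)^2$, which is immediate since $(m,n)=1$ gives $[m,n]=mn$, so $U_2(x)\le\bigl(\sum_{n\le x}1/n\bigr)^2$; with that observation your argument is complete and uniform in $k\ge2$.
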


\begin{proof} By grouping the terms according to $(n_1,\ldots,n_k)=d$, where
$n_j=d m_j$ ($1\le j\le k$), $(m_1,\ldots,m_k)=1$, we have
\begin{equation*}
T_k(x)= \sum_{\substack{dm_1,\ldots,dm_k\le x\\ (m_1,\ldots,m_k)=1}} \frac{d}{[dm_1,\ldots,dm_k]}
= \sum_{\substack{dm_1,\ldots,dm_k\le x\\ (m_1,\ldots,m_k)=1}} \frac1{[m_1,\ldots,m_k]}
\end{equation*}
\begin{equation*}
=\sum_{\substack{m_1,\ldots,m_k\le x\\ (m_1,\ldots,m_k)=1}} \frac1{[m_1,\ldots,m_k]} \sum_{d\le x/M} 1
= \sum_{\substack{m_1,\ldots,m_k\le x\\ (m_1,\ldots,m_k)=1}} \frac{\lfloor x/M \rfloor}{[m_1,\ldots,m_k]},
\end{equation*}
where $M=\max(m_1,\ldots,m_k)$. Let
\begin{equation*}
h(n_1,\ldots,n_k):= \begin{cases} \frac1{[n_1,\ldots,n_k]},  & \text{ if $(n_1,\ldots,n_k)=1$},\\  0, & \text{ otherwise}.
\end{cases}
\end{equation*}

Hence,
\begin{equation} \label{T_k_sums}
T_k(x) = x \sum_{n_1,\ldots,n_k\le x} \frac{h(n_1,\ldots,n_k)}{\max(n_1,\ldots,n_k)}+ O\left( \sum_{n_1,\ldots,n_k\le x}
h(n_1,\ldots,n_k)\right)
\end{equation}
and we estimate the right-hand sums in turn. Here $h(n_1,\ldots,n_k)$ is a symmetric and multiplicative function of $k$ variables and
for prime powers $p^{\nu_1},\ldots,p^{\nu_k}$
($\nu_1,\ldots,\nu_k\ge 0$) one has
\begin{equation*}
h(p^{\nu_1},\ldots,p^{\nu_k})= \begin{cases} \frac1{p^{\max(\nu_1,\ldots,\nu_k)}},  & \text{ if $\min(\nu_1, \ldots, \nu_k)=0$},\\  0,
& \text{ otherwise}.
\end{cases}
\end{equation*}

Consider its Dirichlet series
\begin{equation*}
H(s_1,\ldots,s_k): = \sum_{n_1,\ldots,n_k=1}^{\infty} \frac{h(n_1,\ldots,n_k)}{n_1^{s_1}\cdots n_k^{s_k}} =
\prod_p \sum_{\substack{\nu_1,\ldots,\nu_k=0\\ \min(\nu_1, \ldots, \nu_k) =0}}^{\infty} \frac1{p^{\max(\nu_1,\ldots,\nu_k)+\nu_1s_1+\cdots +\nu_k s_k}}.
\end{equation*}

By grouping the terms according to the values of $r=\max(\nu_1,\ldots, \nu_k)$ we deduce
\begin{equation*}
H(s_1,\ldots,s_k)= \prod_p \frac1{p^r} \sum_{r=0}^{\infty} \sum_{\substack{\nu_1,\ldots,\nu_k=0\\ \max(\nu_1, \ldots, \nu_k)=r \\
\min(\nu_1, \ldots, \nu_k)=0}}^{\infty} \frac1{p^{\nu_1 s_1+\cdots +\nu_k s_k}},
\end{equation*}
which converges absolutely for $\Re s_j >0$ ($1\le j\le k$).

We shall need an estimate for $H_k(\eps,\ldots,\eps)$ for $\eps>0$ (small). We have
\begin{equation*}
H(\eps,\ldots,\eps)= \prod_p \left( 1+ \frac1{p} \sum_{j=1}^{k-1} \binom{k}{j}\frac1{p^{j\eps}}+ O\left(\frac1{p^2}\right)\right).
\end{equation*}

Therefore,
\begin{equation*}
\log H(\eps,\ldots, \eps) = \sum_p \frac{1}{p}\sum_{j=1}^{k-1} \binom{k}{j}\frac1{p^{j\eps}} +O(1) =
\sum_{j=1}^{k-1} \binom{k}{j} \sum_p \frac1{p^{1+j\eps}} +O(1).
\end{equation*}

But $\sum_p p^{-1-\eps} = \log\frac{1}{\eps}+O(1)$ as $\eps \to 0$. Thus,
\begin{equation} \label{bound_eps}
H(\eps,\ldots, \eps) = \exp \left( \sum_{j=1}^{k-1} \binom{k}{j} \log \frac{1}{\eps} +O(1)\right) \asymp
\left(\frac{1}{\eps}\right)^ {2^k-2}.
\end{equation}

Furthermore, for any $\eps>0$, we have
\begin{equation*}
\sum_{n_1,\ldots n_k\le x} h(n_1,\ldots,n_k)=  \sum_{n_1,\ldots n_k\le x} \frac{h(n_1,\ldots,n_k)}{(n_1\cdots n_k)^{\eps/k}}
(n_1\cdots n_k)^{\eps/k}
\end{equation*}
\begin{equation} \label{estimate_1}
\le x^{\eps} \sum_{n_1,\ldots n_k\le x} \frac{h(n_1,\ldots,n_k)}{(n_1\cdots n_k)^{\eps/k}}\le x^{\eps}
H(\eps/k,\ldots,\eps/k).
\end{equation}

Next, note that $\max(n_1,\ldots,n_k) \ge (n_1\cdots n_k)^{1/k}$, so that
\begin{equation*}
\sum_{n_1,\ldots,n_k\le x} \frac{h(n_1,\ldots,n_k)}{\max(n_1,\ldots,n_k)}\le \sum_{n_1,\ldots,n_k\le x} \frac{h(n_1,\ldots,n_k)}{(n_1\cdots n_k)^{1/k}}
\le H(\eps/k,\ldots,\eps/k),
\end{equation*}
which converges. Hence,
\begin{equation*}
\beta_k= \sum_{n_1,\ldots,n_k=1}^{\infty} \frac{h(n_1,\ldots,n_k)}{\max(n_1,\ldots,n_k)}
\end{equation*}
is finite and $\beta_k \le H(\eps/k,\ldots,\eps/k)$. Also,
\begin{equation*}
\beta_k - \sum_{n_1,\ldots,n_k\le x} \frac{h(n_1,\ldots,n_k)}{\max(n_1,\ldots,n_k)}
\end{equation*}
\begin{equation*}
= \sum_{\substack{n_1,\ldots,n_k\in \N \\ {\rm some\,} n_i> x}} \frac{h(n_1,\ldots,n_k)}{\max(n_1,\ldots,n_k)}
\end{equation*}
\begin{equation*}
\le k  \sum_{\substack{n_1\ge n_2,\ldots,n_k \\  n_1> x}} \frac{h(n_1,\ldots,n_k)}{n_1}
\le k  \sum_{\substack{n_1\ge n_2,\ldots,n_k \\  n_1> x}} \frac{h(n_1,\ldots,n_k)}{n_1^{1-\eps}(n_1n_2\cdots n_k)^{\eps/k}}
\end{equation*}
\begin{equation} \label{estimate_2}
\le \frac{k}{x^{1-\eps}}  \sum_{n_1,\ldots,n_k=1}^{\infty}  \frac{h(n_1,\ldots,n_k)}{(n_1\cdots n_k)^{\eps/k}}=kx^{\eps-1}H(\eps/k,\ldots,\eps/k).
\end{equation}

Hence, \eqref{T_k_sums} and the estimates \eqref{estimate_1}, \eqref{estimate_2} give
\begin{equation*}
T_k(x) =\beta_k x+ O\left(x^{\eps} H(\eps/k,\ldots,\eps/k)\right).
\end{equation*}

Now we choose $\eps=1/\log x$ and use the bound \eqref{bound_eps}. The proof is complete.
\end{proof}

\begin{remark} {\rm For $k=2$, Theorem \ref{Th_sum_T_k} recovers Theorem \ref{Th_sum_T_2}. Note that
\begin{equation*}
\beta_2= \frac1{\zeta(3)} \sum_{m,n=1}^{\infty} \frac1{mn\max(m,n)}= \frac{2}{\zeta(3)} \sum_{m=1}^{\infty} \frac1{m^2}
\sum_{n=1}^m \frac1{n}-1 = 3,
\end{equation*}
by Euler's result \eqref{form_Euler}. Is it possible to evaluate the constants $\beta_k$ for any $k\ge 2$?

The sums $T_k(x)$ and $U_k(x)$ are related by the formulas
\begin{equation*}
T_k(x)= \sum_{d\le x} U_k(x/d), \qquad
U_k(x)= \sum_{d\le x} \mu(d) T_k(x/d).
\end{equation*}
}
\end{remark}

\section*{Acknowledgments}
Part of the work in this paper was done when both the second and third authors visited the Max Plank Institute for Mathematics Bonn, in February 2017.
They thank this institution for hospitality and a fruitful working environment. F.~L. was also supported by grant CPRR160325161141 and an A-rated
scientist award both from the NRF of South Africa and by grant no. 17-02804S of the Czech Granting Agency. L.~T. was also supported by the
European Union, co-financed by the European Social Fund EFOP-3.6.1.-16-2016-00004.

\vskip4mm

\noindent
Titus Hilberdink  \\
Department of Mathematics, University of
Reading, Whiteknights,\\
PO Box 220, Reading RG6 6AX, UK\\
E-mail: {\tt t.w.hilberdink@reading.ac.uk}

\vskip2mm

\noindent
Florian Luca  \\
School of Mathematics, University of the Witwatersrand \\
Private Bag X3, WITS 2050, Johannesberg, South Africa\\
and  \\ Research Group in Algebraic Structures and Applications \\
King Abdulaziz University, Jeddah, Saudi Arabia \\
and \\
Department of Mathematics, Faculty of Sciences, University of Ostrava\\
30 dubna 22, 701 03 Ostrava 1, Czech Republic\\
E-mail: {\tt florian.luca@wits.ac.za}

\vskip2mm

\noindent
L\'aszl\'o T\'oth  \\
Department of Mathematics, University of P\'ecs \\
Ifj\'us\'ag \'utja 6, 7624 P\'ecs, Hungary \\
E-mail: {\tt ltoth@gamma.ttk.pte.hu}

\end{document}